\providecommand{\U}[1]{\protect\rule{.1in}{.1in}}
\providecommand{\U}[1]{\protect\rule{.1in}{.1in}}
\newtheorem{theorem}{Theorem}[section]
\newtheorem{proposition}[theorem]{Proposition}
\newtheorem{remark}[theorem]{Remark}
\newtheorem{lemma}[theorem]{Lemma}
\newtheorem{definition}[theorem]{Definition}
\numberwithin{equation}{section}
\begin{document}
\title[On the Maurey--Pisier and Dvoretzky--Rogers theorems]{On the Maurey--Pisier and Dvoretzky--Rogers theorems}
\author[Ara\'ujo]{G. Ara\'ujo}
\address[G. Ara\'ujo]{Departamento de Matem\'{a}tica \\
	\indent Universidade Estadual da Para\'{\i}ba \\
	58.429-600 - Campina Grande \\
	Brazil}
\email{gdasaraujo@gmail.com}
\author[Santos]{J. Santos}
\address[J. Santos]{Departamento de Matem\'{a}tica \\
\indent
	Universidade Federal da Para\'{\i}ba \\
\indent
	58.051-900 - Jo\~{a}o Pessoa, Brazil}
\email{joedsonmat@gmail.com or joedson@mat.ufpb.br}
\thanks{J. Santos was supported by Conselho Nacional de Desenvolvimento Científico e Tecnológico--CNPq.}
\thanks{\indent2010 Mathematics Subject Classification: 46A32, 47H60.}
\thanks{\indent Key words: Absolutely summing operators, Maurey--Pisier theorem, Dvoretzky--Rogers theorem.}

\begin{abstract}
A famous theorem due to Maurey and Pisier asserts that for an infinite dimensional Banach space $E$, the infumum of the $q$ such that the identity map $id_{E}$ is absolutely $\left(  q,1\right)  $-summing is precisely $\cot E$. In the same direction, the Dvoretzky--Rogers Theorem asserts $id_{E}$ fails to be absolutely $\left(  p,p\right)  $-summing, for all $p\geq1$. In this note, among other results, we unify both theorems by charactering the parameters $q$ and $p$ for which the identity map is absolutely $\left(  q,p\right)$-summing. We also provide a result that we call \textit{strings of coincidences} that characterize a family of coincidences between classes of summing operators. We illustrate the usefulness of this result by extending classical result of Diestel, Jarchow and Tonge and the coincidence result of Kwapie\'{n}. 
\end{abstract}

\maketitle


\section{Introduction and background}

Let $2\leq q<\infty$. A Banach space $E$ has cotype $q$ (see \cite[page
218]{diestel}) if there is a constant $C>0$ such that, no matter how we select
finitely many vectors $x_{1},\dots,x_{n}\in E$,%
\begin{equation}
\left(  \sum_{k=1}^{n}\Vert x_{k}\Vert^{q}\right)  ^{\frac{1}{q}}\leq C\left(
\int_{[0,1]}\left\Vert \sum_{k=1}^{n}r_{k}(t)x_{k}\right\Vert ^{2}dt\right)
^{1/2}, \label{99}%
\end{equation}
where $r_{k}$ denotes the $k$-th Rademacher function. In this context we also define
\[
\cot E:=\inf\{q \ : \ E \ \text{has cotype} \ q\}.
\]


Recall that if $1\leq p\leq q\leq\infty$, for Banach spaces $E,F$, a linear
operator $u:E\rightarrow F$ is absolutely $\left(  q,p\right)  $-summing if $\left(  u(x_{j})\right)  _{j=1}^{\infty}\in\ell_{q}(F)$ whenever $\left(x_{j}\right)  _{j=1}^{\infty}\in\ell_{p}^{w}(E)$; we recall that $\ell_{p}^{w}(E)$ is the linear space of the sequences $\left(  x_{j}\right)_{j=1}^{\infty}$ in $E$ such that $\left(  \varphi\left(  x_{j}\right)\right)  _{j=1}^{\infty}\in\ell_{p}$ for every continuous linear functional $\varphi:E\rightarrow\mathbb{K}$. The class of all absolutely $(q,p)$-summing operators from $E$ to $F$ will be denoted by $\Pi_{(q,p)}(E;F)$. We denote $\Pi_{(p,p)}(E;F)$ by $\Pi_{p}(E;F)$. From now on, for any $p>1$ the symbol
$p^{\ast}$ denotes the conjugate of $p$, i.e., $p^{\ast}=p/\left(  p-1\right)
.$ Cotype and absolutely summing operators are closely related by the famous
Maurey--Pisier Theorem (see also \cite{GPR} for further relations between
cotype and absolutely summing operators):

\begin{theorem}[Maurey--Pisier]\label{pisier}
For every infinite dimensional Banach space $E$, we have $\cot E=\inf$\{$a:id_{E}$ is absolutely $\left(  a,1\right)  $-summing\}.
\end{theorem}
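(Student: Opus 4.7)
The plan is to prove the equality by showing two opposite inequalities. Write $q_E := \cot E$ and $\pi_E := \inf\{a : id_E \text{ is absolutely } (a,1)\text{-summing}\}$; the goal is $q_E = \pi_E$.

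For the easy direction $\pi_E \leq q_E$, I would suppose that $E$ has cotype $q$ with constant $C$, take $(x_j)_{j=1}^n \in \ell_1^w(E)$, and chain three standard steps: the cotype inequality, Kahane's inequality replacing the $L^2$-average of the Rademacher sums by the $L^1$-average, and the elementary identification
\[
\int_0^1 \Bigl\Vert \sum_k r_k(t) x_k \Bigr\Vert\, dt \;\leq\; \max_{\varepsilon_k = \pm 1} \Bigl\Vert \sum_k \varepsilon_k x_k \Bigr\Vert \;=\; \sup_{\varphi \in B_{E^*}} \sum_k |\varphi(x_k)| \;=\; \Vert(x_k)\Vert_{\ell_1^w(E)}.
\]
Concatenating these yields $(q,1)$-summability of $id_E$, so $\pi_E \leq q$ for every $q > \cot E$, hence $\pi_E \leq q_E$.

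For the harder converse $\pi_E \geq q_E$, I would argue by contradiction, fixing $\pi_E < a < q_E$. The decisive input is the deep Maurey--Pisier finite representability theorem: when $q_E < \infty$ the space $\ell_{q_E}$ is finitely representable in $E$, and when $q_E = \infty$ the space $c_0$ is. Thus, for every $\varepsilon > 0$ and every $n$, there exist vectors $y_1, \ldots, y_n \in E$ such that $e_k \mapsto y_k$ is a $(1+\varepsilon)$-isomorphism of $\ell_{q_E}^n$ (read as $\ell_\infty^n$ if $q_E = \infty$) onto $\operatorname{span}(y_k)$. A direct Hölder computation for the canonical basis gives $\Vert(e_k)\Vert_{\ell_1^w(\ell_{q_E}^n)} = n^{1/q_E}$ and $\Vert(e_k)\Vert_{\ell_a(\ell_{q_E}^n)} = n^{1/a}$, and transferring via the embedding produces $(y_k)_{k=1}^n$ with
\[
\frac{\Vert(y_k)\Vert_{\ell_a(E)}}{\Vert(y_k)\Vert_{\ell_1^w(E)}} \;\geq\; (1+\varepsilon)^{-2}\, n^{1/a - 1/q_E}.
\]
Since $a < q_E$, the exponent is strictly positive, so this ratio is unbounded in $n$, contradicting absolute $(a,1)$-summability of $id_E$.

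The main obstacle is the Maurey--Pisier finite representability theorem invoked in the second direction; it is a deep result that I would take as a black box rather than reprove. The rest of the argument reduces to routine calculations and the classical Kahane inequality referenced earlier in the paper.
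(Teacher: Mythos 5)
Your proof is correct. Note that the paper does not actually prove Theorem \ref{pisier}: it is quoted as classical background, and is only ``recovered'' afterwards as the $b=1$ case of Theorem \ref{444}, whose own proof of part (ii) already presupposes the hard half of Maurey--Pisier (namely that $\Pi_{(\cot E-\varepsilon,1)}(E;E)\neq\mathcal{L}(E;E)$). So your argument is a genuine, self-contained proof rather than a rederivation. Both halves are sound: the upper bound $\inf\{a\}\le\cot E$ via cotype, Kahane, and the identity $\max_{\varepsilon_k=\pm1}\Vert\sum_k\varepsilon_kx_k\Vert=\sup_{\varphi\in B_{E^*}}\sum_k|\varphi(x_k)|$ (in the complex case this is only an equivalence up to a constant, which is harmless), and the lower bound via finite representability of $\ell_{\cot E}$ (or $c_0$ when $\cot E=\infty$) with the ratio $n^{1/a-1/\cot E}\to\infty$. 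In fact your lower-bound computation is essentially the same one the paper carries out in the proof of Theorem \ref{444}(i), only with the weak $\ell_1$ norm of the transferred basis ($\approx n^{1/\cot E}$) in place of the weak $\ell_{(\cot E)^*}$ norm ($\approx 1$) used there; the paper's version yields the stronger conclusion that $id_E$ is never $(a,(\cot E)^*)$-summing, while yours is exactly what is needed for the stated theorem. One tiny point worth making explicit: since $a>\inf\{a'\}$ you should invoke the trivial inclusion $\ell_{a'}\subset\ell_a$ to get that $id_E$ is $(a,1)$-summing before running the contradiction, and since $\cot E$ is an infimum you correctly pass through cotype $q$ for $q>\cot E$ rather than cotype $\cot E$ itself.
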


In the same direction, the Dvoretzky--Rogers Theorem \cite{DR} tells
us that $id_{E}$ is not absolutely $(p,p)$-summing, regardless of the
$p\geq1.$ In a more general version, as stated in \cite[Theorem 10.5]%
{diestel}, it reads as follows:

\begin{theorem}[Dvoretzky--Rogers]\label{dvo}
If $E$ is an infinite dimensional Banach space, the identity map $id_{E}$ is not absolutely $(q,p)$-summing whenever $\frac{1}{p}-\frac{1}{q}<\frac{1}{2}$.
\end{theorem}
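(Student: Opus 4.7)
The plan is to construct, for each pair $(q,p)$ with $\tfrac{1}{p}-\tfrac{1}{q}<\tfrac{1}{2}$, a family of finite sequences in $E$ whose weak-$\ell_{p}$ norms stay bounded while their $\ell_{q}$ norms diverge. Any alleged $(q,p)$-summing constant for $id_{E}$ would then fail the defining inequality on these sequences. The essential ingredient is the Dvoretzky--Rogers lemma: inside any $n$-dimensional normed space there exist unit vectors $y_{1},\dots,y_{n}$ such that
\[
\Bigl\|\sum_{k=1}^{n}a_{k}y_{k}\Bigr\|\leq \sqrt{2}\,\Bigl(\sum_{k=1}^{n}|a_{k}|^{2}\Bigr)^{1/2}
\]
for every choice of scalars $a_{k}$. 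Since $E$ is infinite dimensional, the lemma is applicable inside $n$-dimensional subspaces of $E$ for every $n$.

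First I would fix a positive, decreasing sequence $(c_{k})$, to be specified later, and, for each large $N$, obtain DR unit vectors $y_{1}^{(N)},\dots,y_{N}^{(N)}$ inside an $N$-dimensional subspace of $E$. Setting $x_{k}^{(N)}:=c_{k}y_{k}^{(N)}$, the $\ell_{q}$-norm computation is immediate: $\bigl(\sum_{k=1}^{N}\|x_{k}^{(N)}\|^{q}\bigr)^{1/q}=\bigl(\sum_{k=1}^{N}c_{k}^{q}\bigr)^{1/q}$, which can be arranged to tend to infinity with $N$ by a suitable choice of $(c_{k})$.

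The main analytic step is the weak-$\ell_{p}$ estimate. In the range $1\leq p\leq 2$ I would dualize the DR inequality: it immediately yields $\bigl(\sum_{k}|\varphi(y_{k}^{(N)})|^{2}\bigr)^{1/2}\leq \sqrt{2}$ for every unit-norm $\varphi\in E^{*}$. H\"older's inequality with exponents $2/p$ and $2/(2-p)$ then gives
\[
\sup_{\|\varphi\|\leq 1}\Bigl(\sum_{k=1}^{N}|\varphi(x_{k}^{(N)})|^{p}\Bigr)^{1/p}\leq \sqrt{2}\,\Bigl(\sum_{k=1}^{N}c_{k}^{2p/(2-p)}\Bigr)^{(2-p)/(2p)}.
\]
For $p\geq 2$ the pointwise inequality $\|\cdot\|_{\ell_{p}}\leq \|\cdot\|_{\ell_{2}}$ on sequences reduces the estimate to the case $p=2$, for which the DR inequality itself directly bounds the weak-$\ell_{2}$ norm by $\sqrt{2}\sup_{k}c_{k}$.

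Finally, taking $c_{k}=k^{-\alpha}$, I would pick $\alpha$ so that $\sum_{k}c_{k}^{2p/(2-p)}<\infty$ while $\sum_{k}c_{k}^{q}=\infty$, i.e.\ $\tfrac{1}{p}-\tfrac{1}{2}<\alpha\leq \tfrac{1}{q}$; such an $\alpha$ exists precisely when $\tfrac{1}{p}-\tfrac{1}{q}<\tfrac{1}{2}$, which is the hypothesis of the theorem. The main obstacle is setting up the duality plus H\"older step cleanly; once this is in place the remaining argument is the elementary exponent balance above. The DR lemma itself is invoked as a black box.
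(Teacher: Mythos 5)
Your argument is correct: the Dvoretzky--Rogers lemma gives the uniform weak-$\ell_{2}$ bound $\sup_{\|\varphi\|\le 1}\bigl(\sum_k|\varphi(y_k)|^2\bigr)^{1/2}\le C$ by duality, the H\"older step with exponents $2/p$ and $2/(2-p)$ is the right way to pass to weak-$\ell_p$ for $p<2$, and the exponent window $\tfrac1p-\tfrac12<\alpha\le\tfrac1q$ is nonempty exactly under the hypothesis $\tfrac1p-\tfrac1q<\tfrac12$. Note that the paper does not prove this statement at all --- it quotes it from Diestel--Jarchow--Tonge (Theorem 10.5) --- and your proof is essentially the classical argument given there, so there is nothing to compare beyond remarking that the only loose end is the exact bookkeeping in the DR lemma (number of vectors versus dimension and the value of the constant), which you rightly treat as a black box since only the uniformity of the constant matters.
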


Our first main result revisits and unifies both theorems. Our second main result provides a family of coincidences for the classes of absolutely summing operators that encompasses the following classical results:

\begin{theorem}[Diestel--Jarchow--Tonge]\label{djt}Let $E$ and $F$ be Banach spaces

(a) If $E$ has cotype $2$, then $\Pi_{2  }\left(  E;F\right)  =\Pi_{1}\left(  E;F\right)$.

(b) If $E$ has cotype $2<q<\infty$, then $\Pi_{r }\left(  E;F\right)  =\Pi_{1 }\left(  E;F\right)$ for all $1<r<q^*$.
\end{theorem}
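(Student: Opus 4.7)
The plan is to reduce, via Pietsch domination, to showing that a canonical evaluation map is $(1,1)$-summing, and then use the Maurey--Pisier theorem together with the cotype hypothesis to carry out this last step. Note that the inclusion $\Pi_1(E;F)\subseteq\Pi_r(E;F)$ for any $r\geq 1$ is the standard inclusion theorem for absolutely summing operators, so only the reverse inclusion $\Pi_r(E;F)\subseteq\Pi_1(E;F)$ requires work.

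Given $u\in\Pi_r(E;F)$, I would first apply Pietsch's domination/factorization theorem to produce a regular Borel probability measure $\mu$ on $(B_{E^{*}},w^{*})$ and a bounded map $\hat{u}$ such that $u=\hat{u}\circ T_{\mu}$, where $T_{\mu}\colon E\to L_{r}(\mu)$ is the canonical evaluation $T_{\mu}x:=(\varphi\mapsto\varphi(x))$, satisfying $\|u(x)\|\leq\pi_{r}(u)\|T_{\mu}x\|_{L_{r}(\mu)}$ for all $x\in E$. By the ideal property of absolutely summing operators, it suffices to prove $T_{\mu}\in\Pi_{1}(E;L_{r}(\mu))$. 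A direct computation, based on the pointwise estimate $\sum_{j}|\varphi(x_{j})|^{r}\leq\bigl(\sum_{j}|\varphi(x_{j})|\bigr)^{r}\leq\|(x_{j})\|_{1,w}^{r}$ for $\varphi\in B_{E^{*}}$ and $(x_{j})\in\ell_{1}^{w}(E)$, followed by Fubini, already gives that $T_{\mu}$ is $(r,1)$-summing with $\pi_{(r,1)}(T_{\mu})\leq 1$.

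The crucial upgrade from $(r,1)$- to $(1,1)$-summability of $T_{\mu}$ is where the cotype hypothesis enters through the Maurey--Pisier theorem. By \thmref{pisier}, cotype $q$ of $E$ yields $id_{E}\in\Pi_{(a,1)}(E;E)$ for every $a>q$ (and for $a=2$ in case (a)); by the ideal property, every bounded operator from $E$ to any Banach space is $(a,1)$-summing, so in particular $T_{\mu}$ inherits $(a,1)$-summability. The remaining task is to combine these two summability bounds for $T_{\mu}$ with a Kahane--Khintchine/Rademacher averaging in $L_{r}(\mu)$ so as to deduce $(1,1)$-summability. The arithmetic range $1<r<q^{*}$ in (b), and its endpoint $r=2$ in (a), reflect precisely the condition $\tfrac{1}{r}+\tfrac{1}{a}>1$ (with $a$ chosen just above $q$) under which this averaging argument closes up; once $T_{\mu}\in\Pi_{1}$ is established, the ideal property delivers $u=\hat{u}\circ T_{\mu}\in\Pi_{1}(E;F)$.

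The principal technical obstacle is this combining step, in which the two intermediate estimates $T_{\mu}\in\Pi_{(r,1)}\cap\Pi_{(a,1)}$ must be merged into the strong $(1,1)$-summability with the sharp range of admissible $r$. The endpoint behavior (inclusion of $r=q^{*}=2$ in (a) versus the strict inequality $r<q^{*}$ in (b)) is the delicate feature, and is where the classical proof proceeds either by an iteration on the summing index or by factoring $T_{\mu}$ through a Hilbert space and invoking the cotype inequality for partial Rademacher sums in $E$.
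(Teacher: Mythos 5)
Your reduction to showing $T_{\mu}\in\Pi_{1}(E;L_{r}(\mu))$ is legitimate, and the computation giving $\pi_{(r,1)}(T_{\mu})\leq 1$ is correct, but the step you yourself flag as the ``principal technical obstacle'' is a genuine gap, and the strategy you sketch for it cannot close. The two facts you propose to merge --- $T_{\mu}\in\Pi_{(r,1)}$ and $T_{\mu}\in\Pi_{(a,1)}$ with $a>q\geq 2\geq r$ --- carry no more information than the first alone: by the inclusion theorem (Theorem~\ref{classicalinclusion}), $\Pi_{(r,1)}\subseteq\Pi_{(a,1)}$ whenever $r\leq a$, so the input from Theorem~\ref{pisier} is absorbed by the trivial estimate and contributes nothing. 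Moreover, no implication of the form ``$(r,1)$-summing $\Rightarrow$ $1$-summing'' can hold without further structure: for $E=\ell_{2}$ the identity is $(2,1)$-summing (cotype $2$) and $(a,1)$-summing for all $a\geq2$, yet not $1$-summing by Dvoretzky--Rogers (Theorem~\ref{dvo}); this is exactly the configuration of your case (a). Note also that your own sufficient condition $\frac{1}{r}+\frac{1}{a}>1$ is violated in case (a), where $r=2$ and $a>2$ give $\frac{1}{2}+\frac{1}{a}<1$. The unspecified ``Kahane--Khintchine/Rademacher averaging in $L_{r}(\mu)$'' is therefore not a detail to be filled in; it is where the entire content of the theorem lives, and it is absent.

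The cotype hypothesis has to enter through operators \emph{into} $E$, not through the $(a,1)$-summability of $id_{E}$. The route taken in this paper (Lemma~\ref{inn} with $m=1$, $a=r$ and $s=1$, which is also the classical argument) rests on Proposition~\ref{Lemma3}: cotype of $E$ gives $\mathcal{L}(c_{0};E)=\Pi_{r^{\ast}}(c_{0};E)$, hence every $(x_{j})_{j}\in\ell_{1}^{w}(E)$ splits as $x_{j}=a_{j}z_{j}$ with $(a_{j})_{j}\in\ell_{r^{\ast}}$ and $(z_{j})_{j}\in\ell_{r}^{w}(E)$ (with $r=2$ in case (a), and $r^{\ast}>q$ in case (b)); then H\"older yields $\sum_{j}\Vert u(x_{j})\Vert=\sum_{j}|a_{j}|\,\Vert u(z_{j})\Vert\leq\Vert(a_{j})\Vert_{r^{\ast}}\bigl(\sum_{j}\Vert u(z_{j})\Vert^{r}\bigr)^{1/r}\leq\Vert(a_{j})\Vert_{r^{\ast}}\,\pi_{r}(u)\,\Vert(z_{j})\Vert_{w,r}$ for $u\in\Pi_{r}(E;F)$. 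No Pietsch factorization is needed, and if you insist on your factorization you must still prove $T_{\mu}\in\Pi_{1}$ by exactly this splitting of weakly $1$-summable sequences. As it stands, your proposal is an outline whose decisive step is both missing and, in the form proposed, unworkable.
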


\begin{theorem}[\cite{kwapien}, Kwapie\'{n}]\label{kw}
Let $1\leq p\leq\infty.$ Then
\[
\Pi_{\left(  r(p),1\right)  }\left(  \ell_1;\ell_p\right)  =\mathcal{L}\left(  \ell_1;\ell_p\right)  \ \text{where}\ \ \frac{1}{r(p)}=1- \left|\frac{1}{p}-\frac{1}{2}\right|.
\]
Moreover, if $r<r(p)$, then $\Pi_{\left(  r,1\right)  }\left(  \ell_1;\ell_p\right)  \neq\mathcal{L}\left(  \ell_1;\ell_p\right)$.
\end{theorem}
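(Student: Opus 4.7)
The statement consists of an inclusion $\mathcal{L}(\ell_1,\ell_p)\subseteq\Pi_{(r(p),1)}(\ell_1,\ell_p)$ together with a sharpness assertion. Two extreme cases provide anchors. At $p=2$, $r(2)=1$ and the inclusion is exactly Grothendieck's theorem $\mathcal{L}(\ell_1,\ell_2)=\Pi_1(\ell_1,\ell_2)$. At the endpoints $p=1,\infty$, $r(p)=2$ and the inclusion follows from the cotype $2$ property of $\ell_1$ via Orlicz's theorem, which asserts that every weakly $1$-summable sequence in $\ell_1$ is square-summable in norm; then any bounded operator $u:\ell_1\to F$ satisfies $(\sum\|u(x_j)\|^2)^{1/2}\le\|u\|(\sum\|x_j\|_1^2)^{1/2}\le C\|u\|\,\|(x_j)\|_{\ell_1^w(\ell_1)}$.

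For intermediate $p$, the plan is to split the range at $p=2$ and propagate from the extremes by an interpolation-type argument. For $2\le p\le\infty$, write $1/p=(1-\theta)/2$ with $\theta=1-2/p$; the exponent arises via $1/r(p)=(1-\theta)+\theta/2=1/2+1/p$, the interpolated value between $r(2)=1$ and $r(\infty)=2$. A H\"older combination of the Grothendieck bound and the Orlicz/cotype bound, applied on finite-dimensional sections, should then deliver the claim; since elements of $\ell_p$ with $p>2$ need not lie in $\ell_2$, a truncation argument is necessary, and one must verify that the dimension-dependent factors cancel. For $1\le p\le 2$, I would pass to the adjoint $u^*:\ell_{p^*}\to\ell_\infty$ (where $p^*\ge 2$) and use a Pietsch/trace-duality identity between $(r,1)$-summing norms of $u$ and $u^*$, reducing to the previously handled range.

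For sharpness, I would display, for each $r<r(p)$, an explicit operator failing $(r,1)$-summability. The standard choice is a normalized $\pm1$-matrix $u_N:\ell_1^N\to\ell_p^N$ (for instance a Walsh matrix); its $\ell_p$-valued outputs are controlled sharply by Khintchine's inequality in $L_p$, whose sharp exponent is $\max(p,2)$ on one side and $\min(p,2)$ on the other, yielding precisely the critical rate $r(p)$. Testing against a natural weakly-$1$-summable input, such as coordinate vectors $(e_j)_{j=1}^N$ rescaled by $N^{-1/2}$, and comparing growth in $N$ forces $r\ge r(p)$ in the limit.

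The main obstacle is the intermediate-$p$ case of the positive direction: the $(r,1)$-summing classes do not form a standard interpolation scale, so extracting the \emph{precise} exponent $r(p)$ requires either a careful calculation via Khintchine's inequality in $L_p$ (controlling $\|\sum_n x_j(n)y_n\|_p$ through the diagonal $(\sum_n|x_j(n)|^2|y_n(k)|^2)^{1/2}$, where $y_n=u(e_n)$) or an appeal to Maurey-type factorization together with the Bennett--Carl machinery for summing properties of the canonical inclusions $\ell_u\hookrightarrow\ell_v$.
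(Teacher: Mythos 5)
First, a point of comparison: the paper does not prove this statement at all. It is quoted verbatim from Kwapie\'{n}'s 1968 paper \cite{kwapien} and used as a black box; indeed, the final theorem of the paper derives its own sharpness claim from the ``moreover'' part of this very result. So there is no in-paper argument to measure your plan against, and it must stand on its own as a proof of a nontrivial classical theorem. As written, it does not: it is a programme whose two anchor cases ($p=2$ via Grothendieck, $p\in\{1,\infty\}$ via Orlicz/cotype $2$ of $\ell_1$) are correct, but whose connecting steps have genuine gaps. The central one you acknowledge yourself: for intermediate $p$ the classes $\Pi_{(r,1)}$ do not form an interpolation scale, and the phrase ``a H\"older combination \dots should then deliver the claim'' is exactly where the entire content of Kwapie\'{n}'s argument lives; nothing in the sketch extracts the exponent $\frac1{r(p)}=1-|\frac1p-\frac12|$ rather than some weaker one.

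Two further steps would fail as stated. The reduction of $1\le p\le 2$ to $p\ge 2$ by passing to $u^{*}:\ell_{p^{*}}\to\ell_{\infty}$ rests on a ``trace-duality identity between $(r,1)$-summing norms of $u$ and $u^{*}$'' that does not exist: $(r,1)$-summability is not self-dual (for instance $id_{\ell_1}$ is $(2,1)$-summing, while by Maurey--Pisier $id_{\ell_\infty}$ is $(a,1)$-summing for no finite $a$), and even granting some duality, $u^{*}$ is an operator out of $\ell_{p^{*}}$ into $\ell_\infty$, which is not of the form $\ell_1\to\ell_q$ and hence does not land in your ``previously handled range.'' For the sharpness, testing a $\pm1$ matrix against rescaled coordinate vectors does not produce the critical rate: already for $p=1$ the sequence $(N^{-1/2}e_j)_{j\le N}$ has weak-$\ell_1$ norm $N^{1/2}$ while $\bigl(\sum_j\|N^{-1/2}e_j\|_1^{r}\bigr)^{1/r}=N^{1/r-1/2}$, so the ratio blows up only for $r<1$; the genuine obstruction at $p=1$ is the finite representability of $\ell_2$ in $\ell_1$ (Rademacher or Gaussian vectors), exactly the mechanism used in the proof of Theorem \ref{444}(i) of this paper. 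In short, the skeleton is reasonable but the load-bearing steps --- the intermediate exponent, the duality reduction, and the choice of extremal vectors --- are either missing or incorrect, so this cannot be accepted as a proof of Theorem \ref{kw}.
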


\section{Main results}


The next definition was introduced by M.C. Matos in \cite{Matos} and, independently, by D. P\'{e}rez-Garc\'{\i}a in \cite{perez}.

\begin{definition}
	\label{oms} Let $1\leq p\leq q\leq\infty$. A multilinear operator
	$T:E_{1}\times\cdots\times E_{m}\rightarrow F$ is multiple $(q,p)$-summing if
	there exist a constant $C>0$ such that
	\[
	\left(  \sum\limits_{j_{1},...,j_{m}=1}^{\infty}\left\Vert T(x_{j_{1}}%
	^{(1)},\dots,x_{j_{m}}^{(m)})\right\Vert ^{q}\right)  ^{\frac{1}{q}}\leq
	C\prod\limits_{k=1}^{m}\left\Vert (x_{j}^{(k)})_{j=1}^{\infty}\right\Vert
	_{w,p}%
	\]
	for all $(x_{j}^{(k)})_{j=1}^{\infty}\in\ell_{p}^{w}\left(  E_{k}\right)  $,
	with $k=1,...,m$. We represent the class of all multiple $(q,p)$-summing
	operators by $\Pi_{(q,p)}^{m}\left(  E_{1},\dots,E_{m};F\right)  $. When
	$E_{1}=\cdots=E_{m}=E$, we denote simply by $\Pi_{(q,p)}^{m}\left(
	^{m}E;F\right)  $.
\end{definition}

Following \cite{arr}, when we write $(x_{j})_{j=1}^{\infty}\in\ell_{r}\ell
_{s}^{\omega}(E)$, it means that there are $\left(  a_{j}\right)
_{j=1}^{\infty}\in\ell_{r}$ and $\left(  z_{j}\right)  _{j=1}^{\infty}\in
\ell_{s}^{w}(E)$ such that $x_{j}=a_{j}z_{j}$ for all $j$.

The following result of Arregui and Blasco \cite[Lemma 3 and Proposition 6]{arr} is crucial for us:

\begin{proposition}\label{Lemma3}
Let $1<r<\infty$. Then $\ell^{\omega}_{1}(E)=\ell_{r}\ell^{\omega}_{r^{\ast}}(E)$ if and only if $\mathcal{L}\left(  c_{0};E\right)  =\Pi_{r}\left(  c_{0};E\right)  $. In particular,
\begin{itemize}
\item[(a)] If $E$ has cotype $2,$ then $\ell_{1}^{\omega}(E)=\ell_{2}\ell_{2}^{\omega}(E)$.
\item[(b)] If $E$ has cotype $q>2$, then $\ell_{1}^{\omega}(E)=\ell_{r}\ell_{r^{\ast}}^{\omega}(E)$ for any $r>q$.
\end{itemize}
\end{proposition}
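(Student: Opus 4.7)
The plan is to use the isometric identification $\ell_1^\omega(E)\cong\mathcal L(c_0,E)$ sending $(x_j)$ to the operator $u$ with $u(e_j)=x_j$, and, for $1<r<\infty$, the parallel identification $\ell_{r^*}^\omega(E)\cong\mathcal L(\ell_r,E)$. Under these, a factorization $x_j=a_jz_j$ with $(a_j)\in\ell_r$ and $(z_j)\in\ell_{r^*}^\omega(E)$ corresponds exactly to writing the operator $u_x$ as $c_0\xrightarrow{D_a}\ell_r\xrightarrow{T_z}E$, where $D_a(b)=(a_jb_j)$ is diagonal multiplication. The heart of the matter is then to show that the existence of such a factorization is equivalent to $u_x\in\Pi_r(c_0,E)$.

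For the ``only if'' direction I would first verify by a direct Fubini-type estimate that $D_a:c_0\to\ell_r$ is $r$-summing whenever $(a_j)\in\ell_r$: for every $(y^{(k)})\in\ell_r^\omega(c_0)$,
\[
\sum_k\|D_ay^{(k)}\|_{\ell_r}^r=\sum_j|a_j|^r\sum_k|y^{(k)}_j|^r\le\|a\|_r^r\,\|(y^{(k)})\|_{w,r}^r,
\]
where the inner sum is controlled by testing with $\pm e_j^*\in B_{\ell_1}$. By the ideal property $u_x=T_z\circ D_a$ is then $r$-summing, so if $\ell_1^\omega(E)=\ell_r\ell_{r^*}^\omega(E)$ then every $u\in\mathcal L(c_0,E)=\ell_1^\omega(E)$ lies in $\Pi_r(c_0,E)$.

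For the ``if'' direction I would start from $(x_j)\in\ell_1^\omega(E)$, use the hypothesis to conclude that $u_x$ is $r$-summing, and invoke Pietsch domination to find a probability $\mu$ on $(B_{\ell_1},w^*)$ with $\|u_x(y)\|\le C(\int|\langle y,\xi\rangle|^r\,d\mu(\xi))^{1/r}$. Setting $a_j=(\int|\xi_j|^r\,d\mu(\xi))^{1/r}$ gives $(a_j)\in\ell_r$ with $\|a\|_r\le 1$ (since $\sum_j|\xi_j|^r\le\|\xi\|_1^r\le 1$ a.e.) and $\|x_j\|\le Ca_j$, so the natural candidate factorization is $z_j=x_j/a_j$ (with $z_j=0$ whenever $a_j=0$).

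The main obstacle is verifying that $(z_j)\in\ell_{r^*}^\omega(E)$, i.e.\ that $T_z:\ell_r\to E$ with $T_z(e_j)=z_j$ is bounded. Since $T_zc=u_x(c/a)$ on finitely supported $c$, Pietsch gives $\|T_zc\|\le C\|c/a\|_{L^r(\mu)}$, and the crux is to bound $\|c/a\|_{L^r(\mu)}$ by a multiple of $\|c\|_{\ell_r}$ using the full Pietsch inequality on $B_{\ell_1}$ and not only its restriction to canonical unit vectors---this is the technical heart of Arregui and Blasco's argument. Once the equivalence is proved, items (a) and (b) follow from classical Maurey-type results: the little Grothendieck theorem gives $\mathcal L(c_0,E)=\Pi_2(c_0,E)$ when $E$ has cotype $2$, while its extension due to Maurey yields $\mathcal L(c_0,E)=\Pi_r(c_0,E)$ for every $r>q$ when $E$ has cotype $q>2$.
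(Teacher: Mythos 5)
The first thing to note is that the paper does not prove this proposition at all: it is quoted from Arregui and Blasco \cite{arr} (their Lemma 3 and Proposition 6), so there is no internal proof to compare against and any complete argument you supply must stand on its own. The architecture you set up is the right one: the identifications $\ell_1^{\omega}(E)\cong\mathcal L(c_0;E)$ and $\ell_{r^*}^{\omega}(E)\cong\mathcal L(\ell_r;E)$, the translation of a splitting $x_j=a_jz_j$ into a factorization $u_x=T_z\circ D_a$, the computation showing $D_a\in\Pi_r(c_0;\ell_r)$ with $\pi_r(D_a)\le\|a\|_r$ (hence the ``factorization implies coincidence'' direction via the ideal property), and the derivation of (a) and (b) from Maurey's extension of the little Grothendieck theorem are all correct.

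The gap is in the converse direction, and it is not merely an omitted verification: the specific construction you propose fails. Take $E=\mathbb{R}$, $r=2$, and $x=(x_j)\in\ell_1$ with every $x_j>0$. The point mass $\mu=\delta_{\tilde x}$ at $\tilde x=x/\|x\|_1\in B_{\ell_1}$ is a perfectly legitimate Pietsch measure for $u_x$, since $|u_x(y)|=\|x\|_1\,|\langle y,\tilde x\rangle|$. Your recipe then yields $a_j=\bigl(\int|\xi_j|^2\,d\mu\bigr)^{1/2}=\tilde x_j$ and $z_j=x_j/a_j=\|x\|_1$ for every $j$, and the constant sequence $(\|x\|_1)_{j}$ is not in $\ell_2=\ell_2^{\omega}(\mathbb{R})$. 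Equivalently, the inequality $\|c/a\|_{L^r(\mu)}\le K\|c\|_{\ell_r}$ that you single out as ``the crux'' is false for an arbitrary Pietsch measure: for $\mu=\delta_{\tilde x}$ one gets $\|c/a\|_{L^2(\mu)}=\bigl|\sum_j c_j\bigr|$, which is unbounded on the unit ball of $\ell_2\cap c_{00}$. Since Pietsch domination only guarantees the existence of \emph{some} dominating measure, your argument as written can land on a bad one. The real content of the hard direction is the choice of the weights $(a_j)$, or equivalently of a good dominating measure (in the example above, $\sum_j\tilde x_j\delta_{e_j}$ also dominates $u_x$ and does produce a valid splitting, with $a_j=\tilde x_j^{1/2}$); producing such a choice in general requires an extra optimization or minimax argument and cannot be delegated to an unspecified Pietsch measure. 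Until that step is supplied, the ``if'' direction is unproved.
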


Note that from the previous result it is immediate that if $E$ has cotype $2$
then $\ell_{1}^{\omega}(E)=\ell_{r} \ell^{\omega}_{r^{\ast}}(E)$ for any $r\geq2$.

Let us to present now an inclusion theorem for multiple summing multilinear operators that, when restricted to the linear case, will be important in the proof of our main result. Before that, let us to recall the classical inclusion theorem for absolutely summing operators \cite[Theorem 10.4]{diestel}, which will be very useful to our propose.

\begin{theorem}\label{classicalinclusion}
If $1\leq a_{j}\leq b_{j}$ for $j=1,2$ and $\frac{1}{b_{1}}-\frac{1}{a_{1}}\leq\frac{1}{b_{2}}-\frac{1}{a_{2}}$, then every absolutely $\left(a_{1},b_{1}\right)$-summing operator is also absolutely $\left(a_{2},b_{2}\right)$-summing.
\end{theorem}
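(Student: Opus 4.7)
The statement is the classical inclusion theorem of Diestel--Jarchow--Tonge, and my plan is to follow the standard scalar multiplication argument combined with H\"older's inequality. I would split the proof into two cases: first the equality $\frac{1}{b_1}-\frac{1}{a_1}=\frac{1}{b_2}-\frac{1}{a_2}$, then recover the strict inequality case by the trivial monotonicity inclusion $\Pi_{(q,p)}\subseteq\Pi_{(q',p)}$ for $q\leq q'$, which follows immediately from $\|\cdot\|_{\ell_{q'}}\leq\|\cdot\|_{\ell_{q}}$ on finitely supported sequences.

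For the equality case, given $u\in\Pi_{(a_1,b_1)}(E;F)$ with summing constant $C$ and a finite sequence $(x_j)_{j=1}^n\subset E$, the decisive step is to apply the $(a_1,b_1)$-summing hypothesis not to $(x_j)$ directly but to the rescaled sequence $(\lambda_j x_j)$ with $\lambda_j:=\|u(x_j)\|^{(a_2-a_1)/a_1}$. This choice ensures $\lambda_j^{a_1}\|u(x_j)\|^{a_1}=\|u(x_j)\|^{a_2}$, so the left-hand side of the summing inequality becomes $\bigl(\sum_j\|u(x_j)\|^{a_2}\bigr)^{1/a_1}$. For the right-hand side I would bound $\|(\lambda_j x_j)\|_{w,b_1}$ by applying H\"older's inequality under the supremum over $\varphi\in B_{E^*}$, with conjugate exponents $b_2/b_1$ and $b_2/(b_2-b_1)$; this produces a factor $\bigl(\sum_j\|u(x_j)\|^{a_2}\bigr)^{1/b_1-1/b_2}$ multiplied by $\|(x_j)\|_{w,b_2}$. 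Rearranging and using $\frac{1}{b_1}-\frac{1}{a_1}=\frac{1}{b_2}-\frac{1}{a_2}$ then yields $\bigl(\sum_j\|u(x_j)\|^{a_2}\bigr)^{1/a_2}\leq C\|(x_j)\|_{w,b_2}$, which is the required $(a_2,b_2)$-summing bound.

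The principal hurdle is the arithmetic bookkeeping of exponents: the H\"older exponent must be chosen exactly so that the second factor equals the weak $b_2$-norm, after which the first factor carries the exponent $1/b_1-1/b_2$, and collecting powers of $\sum_j\|u(x_j)\|^{a_2}$ on the left succeeds only because the equality of differences forces $1/a_1-(1/b_1-1/b_2)=1/a_2$. The borderline case $b_1=b_2$ is handled separately (it reduces to the first-index inclusion), and extending from finite to infinite test sequences is standard by monotone convergence. Beyond these bookkeeping details no deep tools are required, so I expect the real work to lie in keeping the exponent algebra under control.
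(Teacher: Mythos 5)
Your proof is correct and is precisely the standard rescaling-plus-H\"older argument for the inclusion theorem; the paper itself gives no proof, quoting the result from Diestel--Jarchow--Tonge (their Theorem 10.4), whose argument yours reproduces --- the choice $\lambda_j=\|u(x_j)\|^{(a_2-a_1)/a_1}$, the H\"older step with exponents $b_2/b_1$ and $b_2/(b_2-b_1)$, and the reduction of the strict-inequality case to the equality case via the trivial monotonicity in the first index all check out, with the exponent identity $\frac{1}{a_1}-\bigl(\frac{1}{b_1}-\frac{1}{b_2}\bigr)=\frac{1}{a_2}$ following exactly as you say from the equality of the differences. The only caveat is that the hypothesis as printed, $1\leq a_j\leq b_j$, is a typo for $1\leq b_j\leq a_j$ (consistent with the paper's convention $1\leq p\leq q$ for $(q,p)$-summability), and your argument tacitly --- and correctly --- uses the latter orientation.
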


\begin{lemma}\label{inn}
Let $E_{1},...,E_{m}$ and $F$ be infinite dimensional Banach spaces.
\begin{itemize}
\item[(a)] If $E_{1},...,E_{m}$ has cotype $2$ and $r\leq2$, then
\[
\Pi_{\left(  a,r\right)  }\left(  E_{1},..., E_{m};F\right)  \subset\Pi_{\left(s,1\right)  }\left(   E_{1},...,E_{m};F\right)  \ \text{with} \ \frac{1}{r}-\frac{1}{a}\leq1-\frac{1}{s}.
\]
\item[(b)] If $E_{i}$ has cotype $q_{i}>2$, $q=\max\{q_{i}:i=1,...,m\}$ and $1<r<q^{\ast}$, then
\[
\Pi_{\left(  a,r\right)  }\left(  E_{1},..., E_{m};F\right)  \subset\Pi_{\left(s,1\right)  }\left(   E_{1},..., E_{m};F\right) \ \text{with} \ \frac{1}{r}-\frac{1}{a}\leq1-\frac{1}{s}.
\]
\end{itemize}
\end{lemma}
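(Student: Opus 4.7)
The plan is to reduce the lemma to a H\"older inequality argument via the factorization of $\ell_1^\omega(E_k)$ provided by Proposition~\ref{Lemma3}. First I would check that under either hypothesis we have $\ell_1^\omega(E_k) = \ell_{r^*}\,\ell_r^\omega(E_k)$ for each $k=1,\dots,m$. In case (a), since $r\le 2$ we have $r^*\ge 2$ and the remark following Proposition~\ref{Lemma3} applies with exponent $r^*$. In case (b), $q_i\le q$ yields $q_i^*\ge q^*$, so the hypothesis $r<q^*$ forces $r^*>q_i$ for every $i$, and Proposition~\ref{Lemma3}(b) applies to each $E_k$. Moreover, the equality of these sequence spaces together with the open mapping theorem supplies a quantitative decomposition: each $(x_j^{(k)})_j\in \ell_1^\omega(E_k)$ can be written $x_j^{(k)}=\alpha_j^{(k)} z_j^{(k)}$ with $\|(\alpha_j^{(k)})_j\|_{r^*}\,\|(z_j^{(k)})_j\|_{w,r}\le C_k \|(x_j^{(k)})_j\|_{w,1}$.

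Next I would treat the tight case $\tfrac{1}{s}=\tfrac{1}{r^*}+\tfrac{1}{a}$. Fix $T\in \Pi_{(a,r)}^m(E_1,\dots,E_m;F)$, apply the decomposition in every coordinate, and expand
\[
\sum_{j_1,\dots,j_m}\bigl\|T(x_{j_1}^{(1)},\dots,x_{j_m}^{(m)})\bigr\|^s = \sum_{j_1,\dots,j_m}\Bigl(\prod_{k=1}^m |\alpha_{j_k}^{(k)}|^s\Bigr)\bigl\|T(z_{j_1}^{(1)},\dots,z_{j_m}^{(m)})\bigr\|^s.
\]
Applying H\"older to the joint multi-index sum with conjugate exponents $r^*/s$ and $a/s$ (whose conjugacy $s/r^*+s/a=1$ is exactly the tight relation), the first factor separates by Fubini into $\prod_k \|(\alpha_j^{(k)})_j\|_{r^*}^s$, and the second factor is controlled by $\|T\|_{(a,r)}^s\prod_k \|(z_j^{(k)})_j\|_{w,r}^s$ via the multiple $(a,r)$-summability of $T$. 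Taking $s$-th roots and applying the quantitative estimate from the first step gives $T\in \Pi_{(s,1)}^m(E_1,\dots,E_m;F)$ with norm dominated by $\|T\|_{(a,r)}\prod_k C_k$.

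For the general inequality $\tfrac{1}{r}-\tfrac{1}{a}\le 1-\tfrac{1}{s}$, denote by $s_0$ the tight exponent satisfying $1/s_0=1/r^*+1/a$; then $s\ge s_0$ and the trivial inclusion $\ell_{s_0}\subset \ell_s$, applied to the multi-indexed family $(T(x_{j_1}^{(1)},\dots,x_{j_m}^{(m)}))_{J}$, promotes the tight-case conclusion to the stated one. The delicate point is the H\"older step: the exponents must be chosen so that the $\alpha$-factor fully separates into a product of $m$ scalar $\ell_{r^*}$-norms while the $T$-factor retains exactly the $\ell_r^\omega(E_k)$ norms required by the $(a,r)$-summing hypothesis; once that is set up correctly, the rest is essentially bookkeeping on top of the linear Arregui--Blasco machinery.
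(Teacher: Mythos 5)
Your proposal is correct and follows essentially the same route as the paper: factor each weakly $1$-summable sequence via the Arregui--Blasco identity $\ell_1^{\omega}(E_k)=\ell_{r^*}\ell_r^{\omega}(E_k)$ (justified in case (a) by $r^*\ge 2$ and in case (b) by $r^*>q\ge q_i$), then apply H\"older over the multi-index with exponents $r^*$ and $a$. Your only departures are presentational refinements the paper glosses over --- making the decomposition quantitative and handling the non-tight exponent by monotonicity of $\ell_s$ rather than by subconjugate H\"older directly.
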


\begin{proof}
(a) If $r\leq 2$, then $r^*\geq 2$ and, from Theorem \ref{classicalinclusion}, $\Pi_2(E;F)\subseteq\Pi_{r^*}(E;F)$ for all Banach spaces $E$ and $F$. Thus, since $E_i$ has cotype $2$, it follows that $\mathcal{L}(c_0;E_i)=\Pi_2(c_0;E_i)\subseteq \Pi_{r^*}(c_0;E_i)$, that is, $\mathcal{L}(c_0;E_i)=\Pi_{r^*}(c_0;E_i)$. From Proposition \ref{Lemma3} we thus have $\ell_1^w(E_i)=\ell_{r^*}\ell_r^w(E_i)$. Then, if $(x_{j}^{(i)})_{j=1}^\infty\in\ell_{1}^{w}(E_{i})$,
\[
x_{j}^{(i)}=a_{j}^{(i)}z_{j}^{(i)}
\]
with $(  a_{j}^{(i)})  _{j}\in\ell_{r^{\ast}}$ and $(  z_{j}^{(i)})_{j}\in\ell_{r}^{w}(E_{i})$. Since
\[
\frac{1}{s}\leq\frac{1}{r^{\ast}}+\frac{1}{a},
\]
from H\"{o}lder's inequality we have
\begin{align*}
&  \left(  \sum\limits_{j_{1},...,j_{m}=1}^{\infty}\left\Vert T\left(x_{j_{1}}^{(1)},...,x_{j_{m}}^{(m)}\right)  \right\Vert ^{s}\right)^{\frac{1}{s}}\\
&  =\left(  \sum\limits_{j_{1},...,j_{m}=1}^{\infty}\left\vert a_{j_{1}}^{(1)}...a_{j_{m}}^{(m)}\right\vert ^{s}\left\Vert T\left(  z_{j_{1}}^{(1)},...,z_{j_{m}}^{(m)}\right)  \right\Vert ^{s}\right)  ^{\frac{1}{s}}\\
&  \leq\left(  \sum\limits_{j_{1},...,j_{m}=1}^{\infty}\left\vert a_{j_{1}}^{(1)}...a_{j_{m}}^{(m)}\right\vert ^{r^{\ast}}\right)  ^{\frac{1}{r^{\ast}}}\left(  \sum\limits_{j_{1},...,j_{m}=1}^{\infty}\left\Vert T\left(  z_{j_{1}}^{(1)},...,z_{j_{m}}^{(m)}\right)  \right\Vert ^{a}\right)  ^{\frac{1}{a}}\\
&  <\infty.
\end{align*}

(b) Follows as in (a) using Proposition \ref{Lemma3}(b).
\end{proof}

\begin{remark}
This theorem is a generalization of \cite[Theorem 10]{popa}.
\end{remark}

\begin{theorem}\label{444}
Let $a,b\in\lbrack1,\infty)$ and $E$ be an infinite dimensional Banach space.

\begin{itemize}
\item[(i)] If $b\geq\left(  \cot E\right)  ^{\ast}$, then
\[
\inf\left\{  a \ : \ id_{E}\text{ is absolutely }\left(  a,b\right)
\text{-summing}\right\}  =\infty.
\]
\item[(ii)] (ii) If $b<\left(  \cot E\right)^{\ast}$, then
\[
\inf\left\{  a \ : \ id_{E} \ \text{is absolutely} \ \left(  a,b\right)\text{-summing}\right\}  =\frac{b\cot E}{b+\cot E-b\cot E}.
\]
\end{itemize}
\end{theorem}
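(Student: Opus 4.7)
The plan is to sandwich the infimum by combining the Maurey-Pisier theorem with the Arregui-Blasco decomposition $\ell_1^w(E)=\ell_r\ell_{r^*}^w(E)$ from Proposition~\ref{Lemma3}. The key observation is that the trick already used in the proof of Lemma~\ref{inn}, when applied to $id_E$ itself, converts ``$id_E\in\Pi_{(a,r^*)}$'' into ``$id_E\in\Pi_{(s,1)}$'' with $1/s=1/r+1/a$; Maurey-Pisier (Theorem~\ref{pisier}) then pins down $s$, and one reads off the bound on $a$.

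Concretely, suppose $id_E\in\Pi_{(a,b)}$ with $a<\infty$, and pick any $r$ with $r>\cot E$ and $r^*\leq b$. The latter forces $\ell_{r^*}^w(E)\subseteq\ell_b^w(E)$, hence $id_E\in\Pi_{(a,r^*)}$. For $(x_j)\in\ell_1^w(E)$, Proposition~\ref{Lemma3} lets us write $x_j=\lambda_j y_j$ with $(\lambda_j)\in\ell_r$ and $(y_j)\in\ell_{r^*}^w(E)$; H\"older then gives $(x_j)\in\ell_s(E)$ with $1/s=1/r+1/a$. Thus $id_E\in\Pi_{(s,1)}$, and Theorem~\ref{pisier} yields
\[
\frac{1}{r}+\frac{1}{a}\leq\frac{1}{\cot E}.
\]

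In case (i), $b\geq(\cot E)^*$ makes $r^*<(\cot E)^*\leq b$ automatic for every $r>\cot E$, and sending $r\to\cot E^+$ in the displayed inequality forces $1/a\leq 0$, so no finite $a$ works. In case (ii), $b<(\cot E)^*$ gives $b^*>\cot E$, so the choice $r=b^*$ is admissible; then $r^*=b$ and the displayed inequality rearranges to $a\geq b\cot E/(b+\cot E-b\cot E)$. For the matching upper bound in case (ii), Maurey-Pisier supplies $id_E\in\Pi_{(q,1)}$ for every $q>\cot E$, and the classical inclusion theorem (Theorem~\ref{classicalinclusion}) promotes this to $id_E\in\Pi_{(a,b)}$ whenever $1/a\leq 1/b+1/q-1$; sending $q\to\cot E^+$ shows that the infimum is at most $b\cot E/(b+\cot E-b\cot E)$.

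The main subtlety I anticipate is the boundary behavior of Proposition~\ref{Lemma3}: when $\cot E>2$ the decomposition is only guaranteed for $r$ strictly above $\cot E$, so the lower-bound argument cannot simply set $r=\cot E$ and must instead be phrased as a limit. This is harmless because the chain of inequalities is continuous in the free parameter, and in the cotype~$2$ case the remark immediately after Proposition~\ref{Lemma3} permits the extremal choice $r=2$ directly.
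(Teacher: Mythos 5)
Your part (ii) is essentially the paper's own argument. The lower bound is the decomposition-plus-H\"older device from the proof of Lemma \ref{inn} applied to $id_E$ with $r=b^{\ast}$, followed by Theorem \ref{pisier}; the paper packages exactly this as the inclusion $\Pi_{(r,b)}(E;E)\subseteq\Pi_{(\cot E-\varepsilon,1)}(E;E)\neq\mathcal{L}(E;E)$, and the upper bound via Theorem \ref{classicalinclusion} is identical. Two small points you should make explicit there: first, when $\cot E>2$ the space need not have cotype $\cot E$ itself, so Proposition \ref{Lemma3}(b) must be invoked with a cotype $q\in(\cot E,b^{\ast})$, which exists precisely because $b<(\cot E)^{\ast}$; second, the exponent $s$ with $1/s=1/r+1/a$ can drop below $1$, in which case ``$id_E\in\Pi_{(s,1)}$'' is not literally defined --- but then every weakly $1$-summable sequence is absolutely summable, which contradicts Maurey--Pisier (or Dvoretzky--Rogers) just as well, so the inequality $1/r+1/a\leq 1/\cot E$ survives.

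Part (i) is where you genuinely diverge from the paper, and where there is a real gap. The paper proves (i) by exhibiting, via the Maurey--Pisier finite representability of $\ell_{\cot E}$ in $E$, vectors $y_1,\dots,y_n$ with $\Vert y_i\Vert\geq 1/2$ and weak $\ell_{(\cot E)^{\ast}}$-norm at most $1$; this kills $(a,(\cot E)^{\ast})$-summability directly for every $a$, including when $\cot E=\infty$, where one uses $\ell_{\infty}^{n}$ and $(\cot E)^{\ast}=1$. Your limiting argument (letting $r\to\cot E^{+}$ force $1/a\leq 0$) is a clean alternative for $2\leq\cot E<\infty$, and it has the merit of reusing only Proposition \ref{Lemma3} and Theorem \ref{pisier} as black boxes rather than the finite representability theorem. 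But it collapses when $E$ has no finite cotype: then $\cot E=\infty$, there is no admissible $r>\cot E$, and Proposition \ref{Lemma3} is unavailable since it requires a finite cotype. This is not a marginal case --- $c_0$, $\ell_{\infty}$ and $C(K)$ all have $\cot E=\infty$, and for them part (i) with $(\cot E)^{\ast}=1$ is the entire content of the theorem. The patch is one line: for $\cot E=\infty$, Theorem \ref{pisier} says the set $\{a:id_E\in\Pi_{(a,1)}\}$ is empty, and $\Pi_{(a,b)}(E;E)\subseteq\Pi_{(a,1)}(E;E)$ for all $b\geq1$, so no $(a,b)$ works. With that sentence added your proof is complete; as written, case (i) is only established for spaces of finite cotype.
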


\begin{proof}
(i) Let us to prove that $id_{E}$ fails to be $\left(  a,\left(  \cot
E\right)  ^{\ast}\right)  $-summing; this result seems to have been overlooked
in the literature; it appears, in a more general form, in the preprint
\cite{bay} and we sketch the proof below. A famous result of Maurey and Pisier
asserts that $\ell_{\cot E}$ is finitely representable in $E,$ i.e., for all
$n\geq1$, there exists $E_{n}\subset E$ and an isomorphism $S_{n}:\ell_{\cot
E}^{n}\rightarrow E_{n}$ such that $\Vert S_{n}\Vert\cdot\Vert S_{n}^{-1}%
\Vert\leq2$. There is no loss of generality in assuming $\Vert S_{n}\Vert
\leq1$. For $1\leq i\leq n$, let
\[
y_{i}=S_{n}(e_{i}).
\]
Note that%
\[
1=\left\Vert e_{i}\right\Vert _{\ell_{\cot E}^{n}}=\left\Vert S_{n}^{-1}%
(y_{i})\right\Vert _{\ell_{\cot E}^{n}}\leq\left\Vert S_{n}^{-1}\right\Vert
\left\Vert y_{i}\right\Vert \leq2\left\Vert y_{i}\right\Vert ,
\]
and thus $\left\Vert y_{i}\right\Vert \geq1/2.$ Moreover
\begin{align*}
\sup_{\varphi\in B_{E^{\ast}}}\left(  \sum_{i=1}^{n}|\langle\varphi
,y_{i}\rangle|^{\left(  \cot E\right)  ^{\ast}}\right)  ^{\frac{1}{\left(
\cot E\right)  ^{\ast}}} &  =\sup_{\varphi\in B_{E^{\ast}}}\sup_{\alpha\in
B_{\ell_{\cot E}^{n}}}\sum_{i=1}^{n}\alpha_{i}\langle\varphi,y_{i}\rangle   \\
& =\sup_{\alpha\in B_{\ell_{\cot E}^{n}}}\sup_{\varphi\in B_{E^{\ast}}}\langle\varphi,\sum_{i=1}^{n}\alpha_{i}y_{i}\rangle\\
&  =\sup_{\alpha\in B_{\ell_{\cot E}^{n}}}\left\Vert \sum_{i=1}^{n}\alpha_{i}y_{i}\right\Vert _{E} \\
& =\sup_{\alpha\in B_{\ell_{\cot E}^{n}}}\left\Vert \sum_{i=1}^{n}\alpha_{i}S_{n}(e_{i})\right\Vert _{E}\\
&  =\sup_{\alpha\in B_{\ell_{\cot E}^{n}}}\left\Vert S_{n}\left(\alpha\right)  \right\Vert _{E} \\
& \leq\left\Vert S_{n}\right\Vert \sup_{\alpha\in B_{\ell_{\cot E}^{n}}}\left\Vert\alpha\right\Vert _{\ell_{\cot E}^{n}} \leq 1.
\end{align*}
We thus conclude that for any $a\geq1$ we have%
\[
\frac{n}{2^{a}}\leq\sum\limits_{j=1}^{n}\left\Vert id_{E}(y_{j})\right\Vert
^{a}\text{ and }\sup_{\varphi\in B_{E^{\ast}}}\sum_{i=1}^{n}|\langle
\varphi,y_{i}\rangle|^{\left(  \cot E\right)  ^{\ast}}\leq1
\]
and this means that $id_{E}$ is not absolutely $\left(  a,\left(  \cot
E\right)  ^{\ast}\right)  $-summing. The proof of (i) is done.



Now let us prove (ii). Consider $$\lambda=\frac{b\cot E}{b+\cot E-b\cot E}.$$ If $r<\lambda$, then
\[
\frac{1}{r}>\frac{1}{b}-1+\frac{1}{\cot E}.
\]
So
\[
\cot E- \dfrac{rb}{rb+b-r}>0.
\]
Thus, there is $0<\varepsilon<\cot E- \frac{rb}{rb+b-r}$ such that
\[
\frac{1}{b}-\frac{1}{r}<1-\frac{1}{\cot E-\varepsilon  }.%
\]
From Lemma \ref{inn}, with $m=1$ and $s=\cot E-\varepsilon$, we conclude that
\[
\Pi_{\left(  r,b\right)  }\left(  E;E\right)  \subset\Pi_{\left(
	\cot E-\varepsilon,1\right)  }\left(  E;E\right)  \neq\mathcal{L}\left(
E;E\right)
\]
and thus $id_{E}$ fails to be $\left(  r,b\right)  $-summing. So
\[
\lambda\leq\inf\left\{  a \ : \ id_{E} \ \text{is absolutely } \ \left(  a,b\right)\text{-summing}\right\} .
\]

Now, if $r>\lambda$ then
\[
\frac{1}{r}<\frac{1}{b}-1+ \frac{1}{\cot	E}.
\]
Therefore, there exist $\varepsilon>0$ such that

\[\frac{1}{r}\leq\frac{1}{b}-1+ \frac{1}{\cot	E+\varepsilon}.\]

Thus $r>\cot	E+\varepsilon$ and
\[
1 - \frac{1}{\cot	E+\varepsilon}\leq\frac{1}{b}-\frac{1}{r}.
\]
The inclusion theorem (Theorem \ref{classicalinclusion}) asserts that every absolutely $\left(  \cot	E+\varepsilon,1\right)$-summing operator is also absolutely $\left(r,b\right)$-summing. Since $id_{E}$ is absolutely $\left(  \cot	E+\varepsilon,1\right)$-summing, it follows that $id_{E}$ is also absolutely $\left( r,b\right)$-summing. Consequently,
\[
\lambda=\inf\left\{  a \ : \ id_{E} \ \text{is absolutely} \ \left(  a,b\right)\text{-summing}\right\},
\]
and the proof is done.
\end{proof}

\begin{figure}[h]
\begin{tikzpicture}
\path[draw,shade,top color=black,bottom color=black,opacity=.5]
(0,13/6) --
(1,11/5) --
(2,9/4) --
(3,7/3) --
(3.5,2. 4) --
(4,5/2) --
(9/2,8/3) --
(5,3) --
(51/10,28/9) --
(52/10,13/4) --
(53/10,24/7) --
(54/10,11/3) --
(55/10,4) --
(56/10,9/2) --
(5.7,16/3) --
(0,16/3) -- cycle;

\path[draw,shade,top color=black,bottom color=black,opacity=.2]
(0,0) --
(0,13/6) --
(1,11/5) --
(2,9/4) --
(3,7/3) --
(3.5,2. 4) --
(4,5/2) --
(9/2,8/3) --
(5,3) --
(51/10,28/9) --
(52/10,13/4) --
(53/10,24/7) --
(54/10,11/3) --
(55/10,4) --
(56/10,9/2) --
(5.7,16/3) --
(8,16/3) --
(8,13/6-0.5) -- cycle;

\draw[dotted] (6,0) -- (6,2.2);
\draw[dotted] (6,2.9) -- (6,16/3);
\draw[dotted] (0,0) -- (8,13/6-0.5);
\draw[dotted] (0,13/6-0.5) -- (8,13/6-0.5);
\draw[dotted] (8,0) -- (8,13/6-0.5);

\draw (5,2.5) node[right] {$\frac{1}{b}-\frac{1}{a}=\frac{1}{(\cot E)^*}$};

\draw[->] (0,0) -- (8.5,0) node[below] {$b$};
\draw (0,0) node[below] {$1$};
\draw (6,0) node[below] {$(\cot E)^*$};
\draw (8,0) node[below] {$2$};

\draw[->] (0,0) -- (0,35/6) node[left] {$a$};
\draw (0,0) node[left] {$1$};
\draw (0,13/6-0.5) node[left] {$2$};
\draw (0,13/6) node[left] {$\cot E$};

\path[draw,shade,top color=black,bottom color=black,opacity=.2]
(2,-1) -- (2,-1.5) -- (2.5,-1.5) -- (2.5,-1) -- cycle;
\draw (2.5,-1.3) node[right] {$id_E$ is not $(a,b)$-summing};

\path[draw,shade,top color=black,bottom color=black,opacity=.5]
(2,-1.75) -- (2,-2.25) -- (2.5,-2.25) -- (2.5,-1.75) -- cycle;
\draw (2.5,-2.05) node[right] {$id_E$ is $(a,b)$-summing};

\begin{scope}[shift={(0,0)}]
\draw[domain=0:5.7, ultra thick, smooth]
plot (\x,{-1/(\x-6)+2});
\end{scope}1
\end{tikzpicture}
\caption{Graphical overview of Theorem \ref{444}.}
\label{ddssdds}
\end{figure}
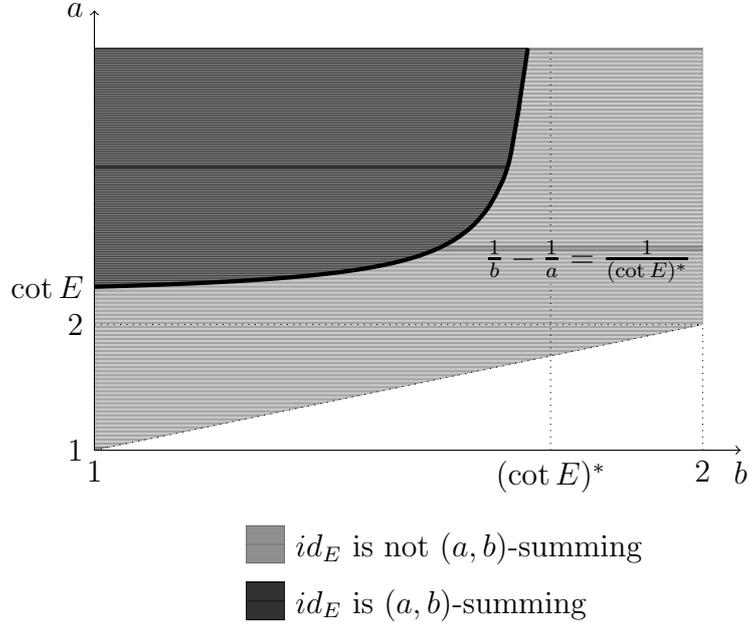

\begin{remark} Note that this result completes the information of the Dvoretzky--Rogers Theorem (Theorem \ref{dvo}) and, when $b=1$, we recover the classical result of Maurey and Pisier (Theorem \ref{pisier}).
\end{remark}


In the previous theorem we prove that for $b<\left(  \cot E\right)  ^{\ast}$ we can fully characterize the classes of $\left(  a,b\right)  $-summing operators that coincide. The Figure 1 illustrates this result and motivates us to name the next result as \textquotedblleft strings of coincidences\textquotedblright, which generalizes Theorem \ref{djt}.

\begin{theorem}[Strings of coincidences]\label{inn2}
Let $E$ and $F$ be infinite dimensional Banach spaces.

\begin{itemize}
\item[(a)] If $E$ has cotype $2$ and $a\leq a_{1}$ and $r\leq r_{1}\leq2$, then
\[
\Pi_{\left(  a,r\right)  }\left(  E;F\right)  =\Pi_{\left(  a_{1},r_{1}\right)  }\left(  E;F\right)  \ \text{with}\ \ \frac{1}{r}-\frac{1}%
{a}=\frac{1}{r_{1}}-\frac{1}{a_{1}}.
\]
\item[(b)] If $a\leq a_{1}$ and $1\leq r\leq r_{1}<(\cot E)^{\ast}$, then
\[
\Pi_{\left(  a,r\right)  }\left(  E;F\right)  =\Pi_{\left(  a_{1},r_{1}\right)  }\left(  E;F\right)  \ \text{with}\ \ \frac{1}{r}-\frac{1}%
{a}=\frac{1}{r_{1}}-\frac{1}{a_{1}}.
\]
\end{itemize}
\end{theorem}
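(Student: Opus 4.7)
The plan for both parts is to sandwich $\Pi_{(a,r)}(E;F)$ and $\Pi_{(a_1,r_1)}(E;F)$ between a common pair of classes, using Lemma \ref{inn} for one direction and Theorem \ref{classicalinclusion} for the other. The inclusion $\Pi_{(a,r)}(E;F)\subseteq\Pi_{(a_1,r_1)}(E;F)$ is immediate from Theorem \ref{classicalinclusion}, since the hypotheses $a\le a_1$, $r\le r_1$, and $\frac{1}{r}-\frac{1}{a}=\frac{1}{r_1}-\frac{1}{a_1}$ are exactly what the classical inclusion theorem requires.

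For the reverse inclusion I would introduce the auxiliary index
\[
\frac{1}{s}:=1-\Bigl(\frac{1}{r}-\frac{1}{a}\Bigr)=1-\Bigl(\frac{1}{r_1}-\frac{1}{a_1}\Bigr),
\]
and observe that $1\le s\le a$ (the first inequality using $r\le a$, the second using $r\ge 1$). The plan is then to factor the reverse inclusion through $\Pi_{(s,1)}(E;F)$ in two steps.

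In \emph{Step 1} (the cotype step), for part (a) the cotype-$2$ hypothesis together with $r_1\le 2$ places us in the regime of Lemma \ref{inn}(a) with $m=1$ applied to the pair $(a_1,r_1)$; by construction $\frac{1}{r_1}-\frac{1}{a_1}=1-\frac{1}{s}$, so the lemma yields $\Pi_{(a_1,r_1)}(E;F)\subseteq\Pi_{(s,1)}(E;F)$. For part (b), the hypothesis $r_1<(\cot E)^\ast$ allows the selection of a cotype $q$ of $E$ with $q>\cot E$ and $r_1<q^\ast$ (which is possible since $q^\ast\to(\cot E)^\ast$ as $q\downarrow\cot E$), and Lemma \ref{inn}(b) then delivers the same inclusion. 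In \emph{Step 2} (the classical-inclusion step), since $1-\frac{1}{s}=\frac{1}{r}-\frac{1}{a}$, $s\le a$, and $r\ge 1$, Theorem \ref{classicalinclusion} gives $\Pi_{(s,1)}(E;F)\subseteq\Pi_{(a,r)}(E;F)$. Chaining the two steps closes the reverse inclusion.

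The main obstacle will be the corner case $r_1=1$ in part (b), where Lemma \ref{inn}(b) is inapplicable; however, $r_1=1$ forces $r=1$, the slope equality then forces $a=a_1$, and the claim is trivially true. Beyond this edge case the substantive content is wholly contained in Lemma \ref{inn}, with the rest being bookkeeping through Theorem \ref{classicalinclusion}.
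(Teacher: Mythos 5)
Your proposal is correct and follows essentially the same route as the paper: both factor through the intermediate class $\Pi_{(s,1)}(E;F)$ with $1-\frac{1}{s}=\frac{1}{r}-\frac{1}{a}$, using Lemma \ref{inn} (with $m=1$) for the cotype-dependent inclusion and Theorem \ref{classicalinclusion} for the rest. Your version is marginally tidier in that it invokes Lemma \ref{inn} only once (the paper proves both $\Pi_{(a,r)}=\Pi_{(s,1)}$ and $\Pi_{(a_1,r_1)}=\Pi_{(s,1)}$) and, unlike the paper, you explicitly dispose of the corner case $r_1=1$ in part (b), where Lemma \ref{inn}(b) does not apply.
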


\begin{proof}
	Note that	
	\begin{equation}\label{coin}
	\Pi_{\left(  a,r\right)  }\left(  E;F\right)  =\Pi_{\left(s,1\right)  }\left(  E;F\right)=\Pi_{\left(a_1,r_1\right)  }\left(  E;F\right),
	\end{equation}
	with $$1-\frac{1}{s}=\frac{1}{r}-\frac{1}{a}=\frac{1}{r_{1}}-\frac{1}{a_1}.$$  In fact, the two equalities in (\ref{coin}) are immediate consequences of the previous theorem and the inclusion theorem for absolutely summing operators (Theorem \ref{classicalinclusion}).
\end{proof}

\begin{figure}[h]
\begin{tikzpicture}
\draw[dotted] (10,1) -- (20,2);
\draw[dotted] (10,2) -- (20,2);
\draw[dotted] (20,1) -- (20,2);

\draw[->] (10,1) -- (20.5,1) node[below] {$r$};
\draw (10,1) node[below] {$1$};
\draw (20,1) node[below] {$2$};

\draw[->] (10,1) -- (10,6.5) node[left] {$a$};
\draw (10,1) node[left] {$1$};
\draw (10,2) node[left] {$2$};

\begin{scope}[shift={(0,0)}]
\draw[opacity=1, domain=10:15, ultra thick, smooth]
plot (\x,{2*\x/(20-\x)});
\end{scope}

\begin{scope}[shift={(0,0)}]
\draw[opacity=.6, domain=10:1500/85, ultra thick, smooth]
plot (\x,{2.5*\x/(25-\x)});
\end{scope}

\begin{scope}[shift={(0,0)}]
\draw[opacity=.5, domain=10:20, ultra thick, smooth]
plot (\x,{3*\x/(30-\x)});
\end{scope}

\begin{scope}[shift={(0,0)}]
\draw[opacity=.4, domain=10:20, ultra thick, smooth]
plot (\x,{5*\x/(50-\x)});
\end{scope}

\begin{scope}[shift={(0,0)}]
\draw[opacity=.3, domain=10:20, ultra thick, smooth]
plot (\x,{7*\x/(70-\x)});
\end{scope}

\begin{scope}[shift={(0,0)}]
\draw[opacity=.1, domain=10:20, ultra thick, smooth]
plot (\x,{10*\x/(100-\x)});
\end{scope}
\end{tikzpicture}
\caption{Strings of coincidence for $(a,r)$-summability when $\cot E=2$.}
\label{aaaaaaaaaaaaa}
\end{figure}
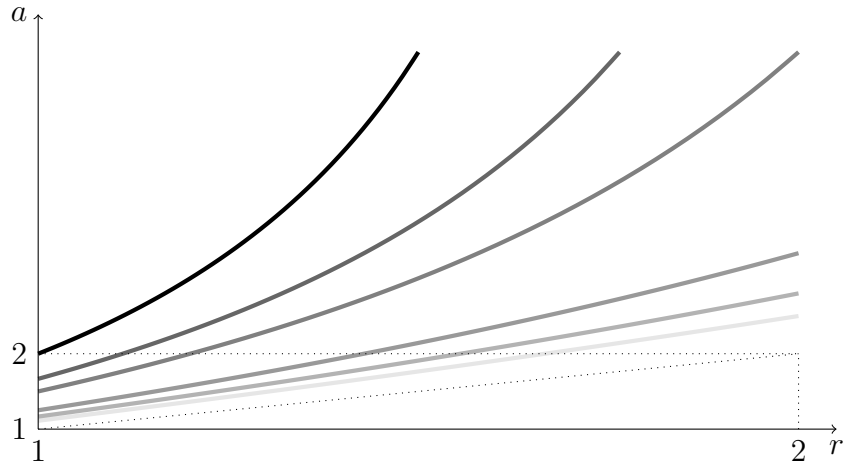

\begin{remark}
The strings of coincidence can also be obtained as a consequence of \cite[Theorem 2.1]{thiago}.
\end{remark}


We finish this section by illustrating how the strings of coincidence can be useful to extend the classical result of Kwapie\'{n} (Theorem \ref{kw}).

\begin{theorem}
	Let $1\leq p\leq\infty$ and $a,b\geq1.$
	If $b\leq2$, then $	\Pi_{\left(  a,b\right)  }\left(  \ell_1;\ell_p\right)  =\mathcal{L}\left(  \ell_1;\ell_p\right)$
	if, and only if, $a\geq\frac{br(p)}{r(p)+b-br(p)}$ where $\frac{1}{r(p)}=1- \left|\frac{1}{p}-\frac{1}{2}\right|$.
\end{theorem}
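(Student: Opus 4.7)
The plan is to combine Kwapie\'n's theorem (Theorem~\ref{kw}) with the strings of coincidences (Theorem~\ref{inn2}(a)), the latter being applicable because $\ell_1$ has cotype $2$. Set $a_0 := \frac{br(p)}{r(p)+b-br(p)}$; the key algebraic identity is
\begin{equation*}
\frac{1}{b}-\frac{1}{a_0}=1-\frac{1}{r(p)},
\end{equation*}
which says precisely that the pair $(a_0,b)$ and the pair $(r(p),1)$ sit on the same ``slope'' line of Theorem~\ref{inn2}(a). This identity is the bridge between Kwapie\'n's regime $(r(p),1)$ and the target regime $(a,b)$.

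\textbf{Sufficiency.} Assume $a\ge a_0$. I would apply Theorem~\ref{inn2}(a) with $E=\ell_1$, small pair $(a,r)=(r(p),1)$ and large pair $(a_1,r_1)=(a_0,b)$; the required hypotheses $r(p)\le a_0$ and $1\le b\le 2$ are immediate from $b\ge 1$ and $r(p)\le 2$. This yields $\Pi_{(r(p),1)}(\ell_1;\ell_p)=\Pi_{(a_0,b)}(\ell_1;\ell_p)$, and Kwapie\'n's theorem identifies the left-hand side with $\mathcal{L}(\ell_1;\ell_p)$. Finally, $a\ge a_0$ gives $\ell_{a_0}\subset\ell_{a}$ and hence the trivial inclusion $\Pi_{(a_0,b)}(\ell_1;\ell_p)\subset \Pi_{(a,b)}(\ell_1;\ell_p)$, which squeezes everything to $\mathcal{L}(\ell_1;\ell_p)$.

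\textbf{Necessity.} I would argue by contraposition: suppose $a<a_0$ (implicitly $a\ge b$, otherwise $\Pi_{(a,b)}$ is trivial). Define $s$ by $\frac{1}{s}=1-\frac{1}{b}+\frac{1}{a}$. Routine manipulations give $1\le s\le a$ (using $b\ge 1$ and $a\ge b$) and, crucially, $s<r(p)$ (since $\frac{1}{a}>\frac{1}{a_0}$ and hence $\frac{1}{s}>\frac{1}{r(p)}$). Applying Theorem~\ref{inn2}(a) once more produces $\Pi_{(s,1)}(\ell_1;\ell_p)=\Pi_{(a,b)}(\ell_1;\ell_p)$. If the right-hand side equalled $\mathcal{L}(\ell_1;\ell_p)$ we would obtain $\Pi_{(s,1)}(\ell_1;\ell_p)=\mathcal{L}(\ell_1;\ell_p)$ with $s<r(p)$, contradicting the ``moreover'' clause of Theorem~\ref{kw}.

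The only real obstacle is bookkeeping the side-inequalities that feed Theorem~\ref{inn2}(a) ($r(p)\le a_0$; $1\le s\le a$; $s<r(p)$ when $a<a_0$); all of them reduce to the standing hypotheses $b\le 2$, $a\ge b\ge 1$, and $1\le r(p)\le 2$. No new analytic input beyond Theorems~\ref{kw} and~\ref{inn2} is required.
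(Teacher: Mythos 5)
Your proposal is correct and follows essentially the same route as the paper: propagate Kwapie\'n's coincidence $\Pi_{(r(p),1)}(\ell_1;\ell_p)=\mathcal{L}(\ell_1;\ell_p)$ along the string of coincidences (Theorem~\ref{inn2}(a), using that $\ell_1$ has cotype $2$), and derive sharpness for general $b$ from the optimality at $b=1$ by sliding back along the same string. Your write-up merely makes explicit the bookkeeping (the identity $\frac1b-\frac1{a_0}=1-\frac1{r(p)}$, the inclusion $\Pi_{(a_0,b)}\subset\Pi_{(a,b)}$, and the contrapositive with $s<r(p)$) that the paper leaves implicit.
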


\begin{proof}
For $b\leq2$ the string of coincidence associated to $\left(  r(p),1\right)$, where $\frac{1}{r(p)}=1- \left|\frac{1}{p}-\frac{1}{2}\right|$, is composed by the pairs $\left(  a,b\right)  $
	such that%
	\[
	\frac{1}{b}-\frac{1}{a}=1-\frac{1}{r(p)}.
	\]
	Thus%
	\[
	a=\frac{br(p)}{r(p)+b-br(p)}.
	\]
	In view of the optimality of $\frac{br(p)}{r(p)+b-br(p)}$ when $b=1$, we can conclude that the above estimate of $a$ is sharp.
\end{proof}

\end{document}